\newcommand{\R}{{\mathbb  R}}
\newcommand{\N}{{\mathbb  N}}
\newcommand{\fdot}{\,\cdot\,}
\newcommand{\wt}{\widetilde}
\newcommand{\cL}{\mathcal{L}}
\newcommand{\cP}{\mathcal{P}}
\newcommand{\cM}{\mathcal{M}}
\DeclareMathOperator{\spa}{span}
\chardef\mathlig@atcode\count255
\def\actively#1#2{\begingroup\uccode`\~=`#2\relax\uppercase{\endgroup#1~}}
\def\mathlig@gobble{\afterassignment\mathlig@next@cmd\let\mathlig@next= }
\def\mathlig@delim{\mathlig@delim}
\def\mathlig@defcs#1{\expandafter\def\csname#1\endcsname}
\def\mathlig@let@cs#1#2{\expandafter\let\expandafter#1\csname#2\endcsname}
\def\mathlig@appendcs#1#2{\expandafter\edef\csname#1\endcsname{\csname#1\endcsname#2}}
\def\mathlig#1#2{\mathlig@checklig#1\mathlig@end\mathlig@defcs{mathlig@back@#1}{#2}\ignorespaces}
\def\mathlig@checklig#1#2\mathlig@end{%
 \expandafter\ifx\csname mathlig@forw@#1\endcsname\relax
 \expandafter\mathchardef\csname mathlig@back@#1\endcsname=\mathcode`#1%
 \mathcode`#1"8000\actively\def#1{\csname mathlig@look@#1\endcsname}%
 \mathlig@dolig#1\mathlig@delim
\fi
\mathlig@checksuffix#1#2\mathlig@end
}
\def\mathlig@checksuffix#1#2\mathlig@end{%
\ifx\mathlig@delim#2\mathlig@delim\relax\else\mathlig@checksuffix@{#1}#2\mathlig@end\fi
}
\def\mathlig@checksuffix@#1#2#3\mathlig@end{%
\expandafter\ifx\csname mathlig@forw@#1#2\endcsname\relax\mathlig@dosuffix{#1}{#2}\fi
\mathlig@checksuffix{#1#2}#3\mathlig@end
}
\def\mathlig@dosuffix#1#2{%
\mathlig@appendcs{mathlig@toks@#1}{#2}%
\mathlig@dolig{#1}{#2}\mathlig@delim
}
\def\mathlig@dolig#1#2\mathlig@delim{%
%The look macro just \futurelets what's coming up and
%then passes control to forw
 \mathlig@defcs{mathlig@look@#1#2}{%
 \mathlig@let@cs\mathlig@next{mathlig@forw@#1#2}\futurelet\mathlig@next@tok\mathlig@next}%
%The forw macro uses chck to try all possible suffixes, passing control
%either to one of those, or to the back macro
 \mathlig@defcs{mathlig@forw@#1#2}{%
  \mathlig@let@cs\mathlig@next{mathlig@back@#1#2}%
  \mathlig@let@cs\checker{mathlig@chck@#1#2}%
  \mathlig@let@cs\mathligtoks{mathlig@toks@#1#2}%
  \expandafter\ifx\expandafter\mathlig@delim\mathligtoks\mathlig@delim\relax\else
  \expandafter\checker\mathligtoks\mathlig@delim\fi
  \mathlig@next
 }%
%The toks macro just stores the suffixes
 \mathlig@defcs{mathlig@toks@#1#2}{}%
%The chk macro goes through the suffixes one by one
%tail recursing until it runs out, or finds one.
 \mathlig@defcs{mathlig@chck@#1#2}##1##2\mathlig@delim{%
  %\message{Lig so far '#1#2', checking for '##1'}%
  \ifx\mathlig@next@tok##1%
   \mathlig@let@cs\mathlig@next@cmd{mathlig@look@#1#2##1}\let\mathlig@next\mathlig@gobble
  \fi
  \ifx\mathlig@delim##2\mathlig@delim\relax\else
   \csname mathlig@chck@#1#2\endcsname##2\mathlig@delim
  \fi
 }%
%
% The back macro, defined only if this is a ligature of at least
% two characters, is a default fallback: go back to the previous char.
% (If this is a ligature of only one character, the mathcode will have
% been saved and set as the fallback elsewhere)
 \ifx\mathlig@delim#2\mathlig@delim\else
  \mathlig@defcs{mathlig@back@#1#2}{\csname mathlig@back@#1\endcsname #2}%
 \fi
}%
\mathchardef\ordinarycolon\mathcode`\:
\def\vcentcolon{\mathrel{\mathop\ordinarycolon}}
\numberwithin{equation}{section}
\theoremstyle{plain}
\newtheorem{theo}{Theorem}[section]
\newtheorem{cor}[theo]{Corollary}
\newtheorem{lem}[theo]{Lemma}
\theoremstyle{definition}
\theoremstyle{remark}
\newtheorem*{ex*}{Example}
\theoremstyle{remark}
\newtheorem*{exs*}{Examples}
\theoremstyle{remark}
\newtheorem*{rem*}{Remark}
\newtheorem{rem}[theo]{Remark}
\newtheorem*{rems*}{Remarks}
\title[Moment Representations]{Moment Representations of the Exceptional $X_1$-Laguerre Orthogonal Polynomials}
\author{Constanze~Liaw}
\address{CASPER and Department of Mathematics, Baylor University, One Bear Place \#97328,      
 Waco, TX  76798, USA}
\email{Constanze$\underline{\,\,\,}$Liaw@baylor.edu}
\urladdr{http://sites.baylor.edu/constanze$\underline{\,\,\,}$liaw/}
\author{John~Osborn}
\address{Department of Mathematics, Baylor University, One Bear Place \#97328,      
 Waco, TX  76798, USA}
\email{John$\underline{\,\,\,}$Osborn@baylor.edu}
\begin{document}
\date{\today}
\subjclass[2010]{33C45, 34B24, 42C05, 44A60.}
\keywords{exceptional orthogonal polynomials, Laguerre polynomials, moments.}

\begin{abstract}
Exceptional orthogonal Laguerre polynomials can be viewed as an extension of the classical Laguerre polynomials per excluding polynomials of certain order(s) from being eigenfunctions for the corresponding exceptional differential operator. We are interested in the (so-called) Type I $X_1$-Laguerre polynomial sequence $\{L_n^\alpha\}_{n=1}^\infty$, $\deg p_n = n$ and $\alpha>0$, where the constant polynomial is omitted.

We derive two representations for the polynomials in terms of moments by using determinants. The first representation in terms of the canonical moments is rather cumbersome. We introduce adjusted moments and find a second, more elegant formula. We deduce a recursion formula for the moments and the adjusted ones. The adjusted moments are also expressed via a generating function. We observe a certain detachedness of the first two moments from the others.
\end{abstract}

\maketitle

%%%%%%%%%%%%%%%%%%%%%%%%%%%%%%%%%%%%%%%%%%%%%%%%%%
\section{Introduction}
%%%%%%%%%%%%%%%%%%%%%%%%%%%%%%%%%%%%%%%%%%%%%%%%%%
Exceptional orthogonal polynomials were originally discovered as exact solutions to certain models in quantum mechanics. Those models include the supersymmetric setting, the Fokker--Planck, as well as the Dirac equations. See \cite{Dutta-Roy, grandati11, Ho, Midya-Roy, Tanaka} for the connections to physics and mathematical physics. In recent years, the field has enjoyed much further attention from both the mathematics and the physics community; see e.g.~\cite{Atia-Littlejohn-Stewart, DuranJAT2014, KMUG, KMUG1, GUKM10, GUMM-Asymptotics, HoSasakiZeros2012, LLMS, Liaw-Littlejohn-Stewart, Odake-Sasaki, Quesne} and the references therein.

We focus on the exceptional $X_1$-Laguerre polynomials from the perspective of the seminal paper by Gom\'ez-Ullate--Kamran--Milson \cite{KMUG}. This polynomial sequence, denoted by $\{L_n^\alpha(x)\}_{n\in\N}$, $\alpha>0$, is orthogonal on $[0,\infty)$ with respect to the $X_1$-Laguerre weight
\[
W^{\alpha}(x)=\frac{x^{\alpha}e^{-x}}{(x+\alpha)^{2}%
}.
\]
The polynomials are complete in $L^2([0,\infty); W^\alpha)$ even though there is no degree $0$ polynomial. Further, they are the eigenfunctions of the \emph{exceptional $X_1$-Laguerre differential expression}
    \begin{equation} \label{eq:diff}
     \ell^\alpha[y] = -xy'' + \left(\frac{x-\alpha}{x+\alpha}\right) \left[(x+\alpha+1)y'-y\right].
    \end{equation}
The spectral analysis of the polynomial system and a rigorous definition of a self-adjoint operator corresponding to $\ell^{\alpha}$ was presented in Atia--Littlejohn--Stewart \cite{Atia-Littlejohn-Stewart}.
The functions $y(x) = L_n^\alpha(x)$ satisfy the eigenvalue equation
\[
\ell^{\alpha}[y]=(n-1)y\quad(0<x<\infty).
\]

Probably the most comprehensive study of three types of exceptional Laguerre polynomials is contained in Liaw--Littlejohn--Milson--Stewart \cite{LLMS}. We refer the reader to Dur\'an \cite{DuranJAT2014} for an interesting relation with so-called exceptional Meixner polynomials.

Literature reveals several representations of the $X_1$-Laguerre polynomial sequence:

For example, in \cite{KMUG} as well as Ho--Sasaki \cite{HoSasakiZeros2012} the first couple of polynomials are listed
\begin{align}
\label{eq:list}
L_1^\alpha (x) &= x+\alpha +1,\\
\label{eq:list05}
L_2^\alpha (x) &= x^2 -\alpha(\alpha+2),\\
\label{eq:list1}
L_3^\alpha (x) &= \frac 1 2 \left[ -x^3 + (\alpha+3)x^2 + \alpha (\alpha+3) x - \alpha (\alpha^2+4\alpha+ 3)\right], \\
\notag&\vdots
\end{align}

Further, these polynomials can be expressed in terms of the classical Laguerre polynomials $\{p_n\}_{n\in\N_0}$ (here $p_{-1}\equiv 0$), see e.g.~\cite[Equation (3.2)]{LLMS}:
\[
L_n^\alpha(x) = -(x+\alpha+1) p_{n-1}^{\alpha-1}(x)+(x+\alpha)p_{n-2}^{\alpha}(x)
\qquad(n\in\N).
\]
Dur\'an's work \cite{DuranJAT2014} contains representations of general exceptional orthogonal polynomials using determinants of classical polynomials.

In \cite{KMUG} a three-term recurrence for the exceptional $X_1$-Laguerre polynomials was found
\begin{align*}
0
=&\quad\,
(n+1)[(x+\alpha)^2(n+\alpha)-\alpha]L_{n+2}^\alpha(x)\\
&+
(n+\alpha)[n+\alpha)^2(x-2n-\alpha-1)+2\alpha]L^\alpha_{n+1}(x)\\
&+
(n+\alpha-1)[(x+\alpha)^2 (n+\alpha+1) -\alpha]L_n^\alpha(x).
\end{align*}

We are most interested in the way the $X_1$-Laguerre polynomial sequence was introduced in the work of Gom\'ez-Ullate--Kamran--Milson \cite{KMUG6}. Namely, the sequence of polynomials $\{v_i(x)\}_{i=1}^\infty$ where
\begin{align}
\label{eq;vs}
v_1(x) = x+\alpha+1\quad\text{and}\quad v_i(x) = (x+\alpha)^i\text{ for }i \ge 2,
\end{align}
span the exceptional $X_1$-Laguerre polynomial flag. Via the Gram--Schmidt process this sequence produces the sequence $\{L_n^\alpha\}_{n=1}^\infty$.

We note that the classical orthogonal polynomials are obtained from the sequence $\{1, x, x^2, \hdots\}$ also by applying Gram--Schmidt. This perspective is used to find the classical moment representation, see e.g.~\cite[Chapter I.3]{Chihara}. An adaption of these ideas leads us to deduce our representations.

The proof of our recursion formula for the moments relies on an application of so-called symmetry factors for differential equations (see e.g.~Cole \cite[p.~66]{Cole}) which was further developed from second to higher order differential equations by Littlejohn, see e.g.~\cite{Littlejohn1983, Littlejohn1984}. In essence, every second order differential equation is symmetrizable, and the corresponding symmetry equation is solved by the weight (of orthogonality). In Krall's \cite{Krall1938} well-known classification theorem (see e.g.~Krall--Littlejohn \cite[Theorem 1(ii)]{KrallLittlejohn}) this theory was used to derive a moment equation for the classical moments. Here we carry out those techniques to obtain a recursive definition for the \emph{(adjusted) moments}
\begin{align*}
\wt\mu_n = \wt\mu_n^\alpha := \int_0^\infty (x+\alpha)^n W^\alpha(x) dx.
\end{align*}
A formula for the \emph{(exceptional) moments}
\begin{align*}
\mu_n =\mu_n^\alpha:= \int_0^\infty x^n W^\alpha(x) dx
\end{align*}
is obtained.

\begin{rem*}
We warn the reader that we often drop the subscript or superscript $\alpha$ to simplify notation.
\end{rem*}

%%%%%%%%%%%%%%%%%%%%%%%%%%%%%%%%%%%%%%%%%%%%%%%%%%
\subsection{Plan of the paper}
%%%%%%%%%%%%%%%%%%%%%%%%%%%%%%%%%%%%%%%%%%%%%%%%%%
In Section \ref{s-ExcCond} we present a preliminary result: We characterize the subspace spanned by the first $m$ exceptional $X_1$-Laguerre polynomials (Lemma \ref{lemma}), which yields what we call the exceptional condition satisfied by all exceptional $X_1$-Laguerre polynomials (Lemma \ref{lemma2}).

A first representation (Theorem \ref{t-FirstRep}) of the exceptional $X_1$-Laguerre polynomials in terms of the exceptional moments is found in Section \ref{s-FirstRep}. The expression is rather cumbersome.

In Section \ref{s-recursion} we focus on recursion formulae for the adjusted (Theorem \ref{t-AdMom}) and the exceptional moments (Theorem \ref{t-ExMom}). The adjusted moments are also expressed explicitly in two different ways (Corollary \ref{corollary} and Theorem \ref{t-generating}). One of the proofs involves a generating function. Throughout Subection \ref{ss-AdMom} we notice that the first two moments are different in nature than the others.

A more elegant representation (Theorem \ref{t-AltRep}) of the exceptional $X_1$-Laguerre polynomials in terms of the adjusted moments is the topic of Section \ref{s-AltRep}. In Remark \ref{r-normalization} we determine the normalization constant so as to yield precisely those polynomials in the literature \cite{KMUG} and \cite{LLMS}. At the very end, we verify the representation formula for $n=1$ and $n=2$.

The author considers Theorems \ref{t-generating} and \ref{t-AltRep} to be the main results of this paper.

%%%%%%%%%%%%%%%%%%%%%%%%%%%%%%%%%%%%%%%%%%%%%%%%%
\section{The exceptional condition and the polynomials $L_n^\alpha$}\label{s-ExcCond}
%%%%%%%%%%%%%%%%%%%%%%%%%%%%%%%%%%%%%%%%%%%%%%%%%
We characterize the span of the first $m$ exceptional $X_1$-Laguerre polynomials as those polynomials of degree less than or equal to $m$ for which the \emph{exceptional condition}
\begin{equation} \label{eq:span}
  p^\prime (-\alpha) - p(-\alpha) = 0
\end{equation}
holds. This fact is well-known to specialists, but it is often written slightly different. We include the proof for the convenience of the reader. We mention aside that it is not hard to see that the polynomials given by \eqref{eq:list} through \eqref{eq:list1}, of course, satisfy the exceptional condition.

In order to formulate precisely and prove the above characterization result, we let $\cP_m$ denote the set of polynomials of $\deg p \le m$ and define the span of the first $m$ exceptional $X_1$-Laguerre polynomials
\[
\cL_m :=\spa\{L_{n}^\alpha : n = 1, \hdots, m\},
\]
and
\[
\cM_m := \{p \in \cP_m : p\text{ satisfies }\eqref{eq:span}\}.
\]

\begin{lem}
\label{lemma}
The sets $ \cL_m = \cM_m $ for all $m\in \N$.
\end{lem}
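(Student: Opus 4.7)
The plan is to establish the two inclusions $\cL_m \subseteq \cM_m$ and $\cM_m \subseteq \cL_m$, with the second inclusion handled by a dimension count.

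First I would verify that the spanning polynomials $v_i$ from \eqref{eq;vs} each satisfy the exceptional condition \eqref{eq:span}. For $v_1(x) = x+\alpha+1$, a direct computation gives $v_1'(-\alpha) = 1 = v_1(-\alpha)$. For $i \geq 2$, the polynomial $v_i(x) = (x+\alpha)^i$ has a zero of order $i \geq 2$ at $x = -\alpha$, so both $v_i(-\alpha)$ and $v_i'(-\alpha)$ vanish. Since \eqref{eq:span} is a linear functional on $\cP_m$ and each $L_n^\alpha$ with $n \leq m$ is, by construction via Gram--Schmidt, a linear combination of $v_1, \ldots, v_n \in \cM_m$, every element of $\cL_m$ lies in $\cM_m$. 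This gives the first inclusion.

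For the reverse inclusion, I would use a dimension argument. The polynomials $L_1^\alpha, \ldots, L_m^\alpha$ have strictly increasing degrees $1, 2, \ldots, m$, hence are linearly independent, so $\dim \cL_m = m$. On the other hand, $\cM_m$ is the kernel in $\cP_m$ of the single linear functional $p \mapsto p'(-\alpha) - p(-\alpha)$, which is nontrivial (it takes the value $-1$ on the constant polynomial $1 \in \cP_m$). Since $\dim \cP_m = m+1$, the rank-nullity theorem yields $\dim \cM_m = m$. Combined with $\cL_m \subseteq \cM_m$, equality of dimensions forces $\cL_m = \cM_m$.

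No step poses a genuine obstacle: the only point requiring a little care is to verify that the functional in \eqref{eq:span} is nontrivial on $\cP_m$ so that $\cM_m$ really has codimension one, but this is immediate from evaluating on any constant. The argument works uniformly for all $m \in \N$.
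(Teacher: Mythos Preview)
Your proof is correct, and the dimension argument for the reverse inclusion matches the paper's exactly. The difference lies in how the inclusion $\cL_m\subseteq\cM_m$ is established.

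The paper argues via the differential operator: for $p\in\cL_m$, invariance of $\cL_m$ under $\ell^\alpha$ forces $\ell^\alpha[p]$ to be a polynomial, which in turn forces the bracketed factor $(x+\alpha+1)p'-p$ in the exceptional term of \eqref{eq:diff} to vanish at $x=-\alpha$; this is precisely \eqref{eq:span}. Your route instead goes through the flag basis $\{v_i\}$ of \eqref{eq;vs}: you check \eqref{eq:span} directly on each $v_i$ and then invoke linearity plus the Gram--Schmidt construction of the $L_n^\alpha$ from the $v_i$. Your argument is more elementary in that it does not appeal to invariance under $\ell^\alpha$ or to the eigenfunction property at all; the paper's argument has the conceptual advantage of explaining \emph{why} the condition \eqref{eq:span} is the natural one, namely that it is exactly what is needed for $\ell^\alpha$ to preserve polynomials. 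You are also slightly more explicit than the paper in verifying that the functional $p\mapsto p'(-\alpha)-p(-\alpha)$ is nontrivial on $\cP_m$, which justifies $\dim\cM_m=m$.
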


\begin{proof} 
To show $\cL_m \subseteq \cM_m$ take $p\in \cL_m$.  Then $\ell^\alpha[p]\in \cL_m$, because $\cL_m$ is a subspace of $L^2(\mu^\alpha)$ which is invariant under $\ell^\alpha$. In particular $\ell^\alpha[p](x)$ is a polynomial in $x$. In virtue of the definition of $\ell^\alpha$, some cancellation has to take place in the exceptional term $$\left(\frac{x-\alpha}{x+\alpha}\right) \left[(x+\alpha+1)p'-p\right].$$ So the term in square brackets must equal zero when evaluated at $x=-\alpha$.
This yields precisely the exceptional condition \eqref{eq:span}. And so we have $\cL_m \subseteq \cM_m$.

The opposite containment follows from a dimension argument. Namely, we have $\dim\cP_m = m+1$. This implies $ \dim\cM_m = m$, due to the imposition of the one restriction $\eqref{eq:span}$.  We also have $\dim \cL_m = m$, because the set is spanned by $m$ polynomials with mutually different orders (implying their linear independence). Finally, we recall that $\cL_m$ and $\cM_m$ are both subspaces of $\cP_m$, and $\cL_m \subseteq \cM_m$.
  \end{proof}

The exceptional condition \eqref{eq:span} yields a condition on the coefficients of the polynomials $L_n^\alpha$. To see this, we write
\[
L_{n}^{\alpha}(x) = \sum_{k = 0}^{n} c_{nk} x^k 
\]
and compute $$\left(L_{n}^{\alpha}\right)'(x) = \sum_{k = 1}^{n} k c_{nk} x^{k-1}.$$ Since $L_n^\alpha\in \cL_n$, condition \eqref{eq:span} applies and we have
$
0 = \left(L_{n}^{\alpha}\right)'(-\alpha) - L_{n}^{\alpha}(-\alpha).
$ 
We conclude:
\begin{lem}
\label{lemma2}
The coefficients of the $X_1$-exceptional Laguerre polynomials $L_{n}^{\alpha}(x) = \sum_{k = 0}^{n} c_{nk} x^k $ obey
\begin{equation} \label{eq:main}
  -c_{n0} + \sum_{k=1}^n c_{nk} \left[ k(-\alpha)^{k-1} - (-\alpha)^k \right] = 0.
\end{equation}
\end{lem}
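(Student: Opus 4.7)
The statement is a direct bookkeeping consequence of the exceptional condition \eqref{eq:span} as established by Lemma \ref{lemma}, and most of the setup has already been carried out in the paragraph preceding the lemma. My plan is simply to substitute the coefficient expansion of $L_n^\alpha$ into \eqref{eq:span} and collect terms; no estimate or structural argument is needed.

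First I would invoke Lemma \ref{lemma}: since $L_n^\alpha \in \cL_n = \cM_n$, the polynomial $L_n^\alpha$ satisfies $p'(-\alpha) - p(-\alpha) = 0$. Next, writing $L_n^\alpha(x) = \sum_{k=0}^n c_{nk} x^k$ and differentiating term-by-term gives $(L_n^\alpha)'(x) = \sum_{k=1}^n k\, c_{nk} x^{k-1}$. Then I would evaluate both at $x = -\alpha$ to obtain
\[
(L_n^\alpha)'(-\alpha) - L_n^\alpha(-\alpha) = \sum_{k=1}^n k\, c_{nk} (-\alpha)^{k-1} - \sum_{k=0}^n c_{nk} (-\alpha)^k.
\]
Isolating the $k=0$ contribution of the second sum (which equals $c_{n0}$) and combining the remaining two sums over $k = 1, \ldots, n$ yields exactly
\[
-c_{n0} + \sum_{k=1}^n c_{nk}\bigl[k(-\alpha)^{k-1} - (-\alpha)^k\bigr],
\]
and by the exceptional condition this quantity vanishes, which is \eqref{eq:main}.

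There is no real obstacle here. The only step requiring any care is index alignment: splitting off the constant term $c_{n0}$ of $L_n^\alpha(-\alpha)$ from the derivative sum (which starts at $k=1$) so that the two sums can be merged into a single sum from $k=1$ to $n$. Everything else is a one-line substitution into the characterization supplied by Lemma \ref{lemma}.
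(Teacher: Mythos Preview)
Your proposal is correct and follows essentially the same argument as the paper: the paper's proof (given in the paragraph immediately preceding the lemma) likewise writes out $L_n^\alpha$ in the monomial basis, differentiates, invokes $L_n^\alpha \in \cL_n$ together with Lemma~\ref{lemma} to apply the exceptional condition \eqref{eq:span}, and reads off \eqref{eq:main} after evaluating at $x=-\alpha$.
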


%%%%%%%%%%%%%%%%%%%%%%%%%%%%%%%%%%%%%%%%%%%%%%%%%
\section{First Representation of $L_{n}^{\alpha}$}\label{s-FirstRep}
%%%%%%%%%%%%%%%%%%%%%%%%%%%%%%%%%%%%%%%%%%%%%%%%%
Fix $n\in \N$. Recall that $L_{n}^{\alpha} = \sum_{k=0}^n c_{nk} x^k$. We determine $c_{nk}$ for $k=0,1,\dots,n$ by means of a linear system of $n+1$ equations
  $Ac = b$, where
  $$
  c := \left[ \begin{array}{c}
c_{n0} \\ c_{n1} \\ \vdots \\ c_{nn} \end{array} \right]\in \R^{n+1}
\qquad\text{and}\qquad  
b := \left[ \begin{array}{c}
0 \\  \vdots \\ 0 \\ K_n \end{array} \right]\in \R^{n+1}
$$
with normalizing constant $K_n $ and $A$ is given below.
In this section, we do not fix $K_n$, but merely assume $K_n\neq 0$.
In Remark \ref{r-normalization} below we determine the normalization constant such that we obtain exactly the polynomials described in \cite{KMUG} and \cite{LLMS}.

\begin{theo}
\label{t-FirstRep}
The exceptional $X_1$-Laguerre polynomials admit the representation
\[
L_{n}^\alpha(x)
= \frac{1}{\det  \, A} \sum_{k=0}^n (\det  \, A_k) x^k
 = \frac{K_n}{\det  \, A} \cdot 
\left| \begin{array}{c} (\text{First }n\text{ rows of }A) \\ 1 \quad x \quad x^2 \quad \hdots \quad x^n \\ \end{array} \right|
\qquad(n\in\N),
\]
where
\[
A= \left[ \begin{array}{cccc}
-1 & 1(-\alpha)^0 - (-\alpha)^1  & \hdots &  n(-\alpha)^{n-1} - (-\alpha)^n \\
&&&\\
\mu_1 + (\alpha + 1)\mu_0  & \mu_2 + (\alpha + 1)\mu_1  & \hdots &  \mu_{n+1} + (\alpha + 1)\mu_n \\
&&&\\
\displaystyle{\sum_{m=0}^2} \binom{2}{m} \, \mu_m \; \alpha^{2-m}  &\displaystyle{ \sum_{m=0}^2} \binom{2}{m} \, \mu_{m+1} \; \alpha^{2-m}  & \hdots &  \displaystyle{\sum_{m=0}^2} \binom{2}{m} \, \mu_{m+n} \; \alpha^{2-m} \\
&&&\\
\vdots & \vdots && \vdots \\
&&&\\
\displaystyle{ \sum_{m=0}^n} \binom{n}{m} \, \mu_m \; \alpha^{n-m}  &\displaystyle{ \sum_{m=0}^n }\binom{n}{m} \, \mu_{m+1} \; \alpha^{n-m}  & \hdots & \displaystyle{ \sum_{m=0}^n} \binom{n}{m} \, \mu_{m+n} \; \alpha^{n-m} \\ \end{array} \right] ,
 \]
 the exceptional moments are given by $\mu_k =\mu_k^\alpha = \int_0^\infty x^k W^\alpha(x) dx$,
 and where the matrix $A_k$ is obtained from $A$ by replacing the $(k+1)$-st column with the vector $b$; as is done in Cramer's rule.
(In Section \ref{s-recursion} below we find a recursion formula for the moments $\mu_k$.)
\end{theo}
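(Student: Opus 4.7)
My plan is to set up the defining linear system $Ac=b$ by collecting $n+1$ linear constraints on the unknown coefficients $c_{n0},\ldots,c_{nn}$, then extract each coefficient via Cramer's rule, and finally repackage $\sum_k c_{nk}x^k$ as a single $(n+1)\times(n+1)$ determinant using one more cofactor expansion.

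The constraints I would use are threefold. First, Lemma \ref{lemma2} already supplies the exceptional condition, producing exactly the first row of $A$ with right-hand side $0$. Second, because the Gram--Schmidt procedure applied to the flag \eqref{eq;vs} yields the $L_n^\alpha$, we have $\cL_{n-1}=\spa\{v_1,\ldots,v_{n-1}\}$ and $L_n^\alpha\perp\cL_{n-1}$ in $L^2(W^\alpha)$, which delivers $n-1$ orthogonality constraints $\int_0^\infty L_n^\alpha(x) v_j(x)W^\alpha(x)\,dx=0$ for $j=1,\ldots,n-1$. Third, I impose the single normalization $\int_0^\infty L_n^\alpha(x)(x+\alpha)^n W^\alpha(x)\,dx=K_n$. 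Expanding $v_1(x)=x+\alpha+1$ and $v_j(x)=(x+\alpha)^j$ via the binomial theorem and reading off the coefficient of $c_{nk}$ in each constraint produces precisely rows 2 through $n+1$ of $A$ as stated, with right-hand side $b=(0,\ldots,0,K_n)^T$.

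Before solving, I would confirm that $A$ is nonsingular. By Lemma \ref{lemma} we have $\cL_n=\cM_n$ with $\dim\cM_n=n$, so the exceptional condition together with orthogonality to $v_1,\ldots,v_{n-1}$ cuts $\cP_n$ down to the one-dimensional subspace $\spa\{L_n^\alpha\}$. The last row is nontrivial on this line: since $v_n=(x+\alpha)^n$ lies in $\cL_n$ but not in $\cL_{n-1}$, its projection onto the span of $L_n^\alpha$ is nonzero, and hence $\int_0^\infty L_n^\alpha v_n W^\alpha\,dx\neq 0$. Thus the system admits a unique solution for each choice $K_n\neq 0$.

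The final step is to consolidate. Cramer's rule gives $c_{nk}=\det A_k/\det A$. Cofactor expansion of $\det A_k$ along column $k+1$, combined with the fact that $b$ has only a single nonzero entry $K_n$ in its last position, yields $\det A_k=(-1)^{n+k}K_n\, M_{n+1,k+1}$, where $M_{n+1,k+1}$ is the minor of $A$ obtained by striking the last row and column $k+1$. Expanding the $(n+1)\times(n+1)$ determinant formed by the first $n$ rows of $A$ and the row $(1,x,x^2,\ldots,x^n)$ along its last row produces $\sum_{k=0}^n(-1)^{n+k}x^k M_{n+1,k+1}$. Comparing the two cofactor expansions gives the stated identity. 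The main delicate point, and the only place where something could silently fail, is the nonvanishing of $\det A$; once that is established, sign-tracking in the cofactor expansions is the sole remaining task.
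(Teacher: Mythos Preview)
Your proposal is correct and follows essentially the same route as the paper: the first row comes from the exceptional condition (Lemma~\ref{lemma2}), rows $2$ through $n+1$ from the conditions $\langle L_n^\alpha,v_j\rangle_{W^\alpha}=K_n\delta_{nj}$ for $j=1,\ldots,n$ (expanding $(x+\alpha)^s$ binomially), then Cramer's rule and a cofactor expansion along the last row yield the determinant formula. Your justification of invertibility is slightly more explicit than the paper's (you argue directly that $\langle L_n^\alpha,v_n\rangle\neq 0$), but the underlying dimension argument is the same.
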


\begin{rem*}
There is, indeed, no polynomial of order zero.
\end{rem*}

\begin{rem*}
The matrix $A$ is invertible, since the exceptional $X_1$-Laguerre polynomials are determined uniquely by exactly those conditions. Indeed, Lemma \ref{lemma} ensures that the polynomial $\sum_{k = 0}^{n} c_{nk} x^k$ belongs to the vector space $\cL_n$, and the other conditions given by the rows of the linear system simply require that it is orthogonal to the subspace $\cL_{n-1}$. Since $\dim \left(\cL_n \setminus \cL_{n-1}\right) =1$, the polynomial $L_n^\alpha$ is uniquely (up to choosing the normalizing constant $K_n\neq 0$) defined by these conditions.
\end{rem*}

\begin{proof}
One condition on the coefficients $c_{nk}$ was given by \eqref{eq:main}, which we rewrite in terms of the dot product
\[
\left[ \begin{array}{ccccc}
-1 \,\,\,&\,\,\, 1(-\alpha)^0 - (-\alpha)^1 \,\,\,&\,\,\, 2(-\alpha)^1 - (-\alpha)^2 \,\,\,&\,\,\, \hdots \,\,\,&\,\,\, n(-\alpha)^{n-1} - (-\alpha)^n \end{array} \right] \cdot 
c=0.
\]
We use the row vector in square brackets as the first row of the matrix $A$.

The other $n$ rows are obtained from orthogonality conditions: Recall that the polynomials $L_n^\alpha$ are obtained from the sequence $v_i$ given by \eqref{eq;vs} via the Gram--Schmidt process. In particular, we have
\[
\left< L_{n}^\alpha \,\,,\, v_k \right>_{W^\alpha} = K_n\delta_{nk}
\quad\text{for}\quad k = 1, \hdots , n,
\]
where $\delta_{nk}$ is the Kronecker delta.
We now find explicit expressions for these conditions for the different values of $k = 1, \hdots, n$.

For $k = 1$, we have
\begin{align*}
K_n\delta_{n1} &= \left< L_{n}^\alpha \,\,,\, v_1 \right>_{W^\alpha} \\
&=
 \left< \sum_{k=0}^n c_{nk}x^k \,\,,\; x+\alpha+1 \right>_{W^\alpha}\\ 
&=
\int_0^\infty \left( \sum_{k=0}^n c_{nk}x^{k+1} + (\alpha + 1)\sum_{k=0}^n c_{nk}x^k \right) \, W^\alpha(x) \, dx \\
&=
 \sum_{k=0}^n c_{nk} (\mu_{k+1} + (\alpha + 1)\mu_k) .
\end{align*}

The second row of the matrix $A$ is determined by the first factor of the dot product
\[
\left[ \begin{array}{cccc}
\mu_1 + (\alpha + 1)\mu_0 \,\,\,\,&\,\,\,\, \mu_2 + (\alpha + 1)\mu_1 \,\,\,\,&\,\,\,\, \hdots \,\,\,\,&\,\,\,\, \mu_{n+1} + (\alpha + 1)\mu_n \end{array} \right] 
\cdot 
c =K_n\delta_{n1} .
\]

When $n\ge 2$ we consider $2\le s\le n$ and use the binomial theorem,
\[
v_s = (x + \alpha)^s = \sum_{m=0}^s \binom{s}{m} \, x^m \, \alpha^{s-m}.
\]
We compute
\begin{align*}
K_n\delta_{ns} 
&=
\left< L_{n}^\alpha \,\,,\, v_s \right>_{W^\alpha}\\
&=\int_0^\infty \left(\sum_{k=0}^n c_{nk} \, x^k\right)\left(\sum_{m=0}^s \binom{s}{m} \, x^m \, \alpha^{s-m}\right) \, W^\alpha(x) \, dx\\
&= 
\int_0^\infty \sum_{k=0}^n \, c_{nk} \, \left(\sum_{m=0}^s \binom{s}{m} \, x^{m+k} \, \alpha^{s-m}\right) \, W^\alpha(x) \, dx
\\
&= \sum_{k=0}^n \, c_{nk} \, \left(\sum_{m=0}^s \binom{s}{m} \, \int_0^\infty x^{m+k} \, \alpha^{s-m} \, W^\alpha(x) \, dx\right)\\
&  = \sum_{k=0}^n \, c_{nk} \, \left(\sum_{m=0}^s \binom{s}{m} \, \mu_{m+k} \; \alpha^{s-m} \right).
\end{align*}

Rewriting the summation as a dot product as before, we find that the $(s+1)$-st row of the matrix $A$ is determined by
\[
\left[ \begin{array}{cccc}
\displaystyle{\sum_{m=0}^s} \binom{s}{m} \, \mu_m \; \alpha^{s-m} & \displaystyle{\sum_{m=0}^s} \binom{s}{m} \, \mu_{m+1} \; \alpha^{s-m} & \hdots & \displaystyle{\sum_{m=0}^s} \binom{s}{m} \, \mu_{m+n} \; \alpha^{s-m} \end{array} \right]
\cdot 
c=K_n\delta_{ns} .
\]

We solve the linear system $Ac = b$ using Cramer's rule
\[
c_{nk}
= 
(\det A_k) /(\det A)
\qquad(\text{for }k=0,1, \hdots, n).
\]

It remains to build the polynomial from those coefficients $c_{nk}$, $k=0,1, \hdots, n$.
With the definition of the vector $b$ we easily obtain
\begin{equation*}
\left| \begin{array}{c}
    (\text{First } n \text{ rows of }A)  \\
  0 \,\, \hdots\,\,  0  \quad x^k\quad  0\,\, \hdots \,\, 0
  \end{array} \right|
  =  \frac{(\det  \, A_k)}{K_n} \, x^k .
\end{equation*}
In the above determinant the entry $x^k$ is in the $(k+1)$-st column of the matrix.

So the desired formula
\begin{align*}
\frac{1}{\det  \, A} \sum_{k=0}^n (\det  \, A_k) x^k
 = \frac{K_n}{\det  \, A} \cdot 
\left| \begin{array}{c} (\text{First }n\text{ rows of }A) \\ 1 \quad x \quad x^2 \quad \hdots \quad x^n \\ \end{array} \right|
\end{align*}
follows from expansion of the matrix by minors along the last row.
\end{proof}

%%%%%%%%%%%%%%%%%%%%%%%%%%%%%%%%%%%%%%%%%%%%%%%%%
\section{Moment Formulas}
\label{s-recursion}
%%%%%%%%%%%%%%%%%%%%%%%%%%%%%%%%%%%%%%%%%%%%%%%%%
Because our representation for $L_{n}^\alpha(x)$ effectively depends on the moments $\mu_k$, we now develop a recursive formula for the moments.  We do this in an indirect fashion.
Namely, rather than working with the moments $\mu_k$, we initially develop a recursion formula for the adjusted moments $\wt\mu_k $. We then apply the binomial formula to derive the expression for $\mu_k$. We choose this indirect approach because it became evident during the course of our investigation that the moment calculations for the adjusted moments are more concise. 
(It is possible to do the computations for the exceptional moments $\mu_k$ directly, but such a direct computation turns out to be rather messy.)

%%%%%%%%%%%%%%%%%%%%%%%%%%%%%%%%%%%%%%%%%%%%%%%%%
\subsection{Adjusted Moments}\label{ss-AdMom}
%%%%%%%%%%%%%%%%%%%%%%%%%%%%%%%%%%%%%%%%%%%%%%%%%

\begin{theo}
\label{t-AdMom}
The adjusted moments $\wt\mu_k= \int_0^\infty (x+\alpha)^k \, W^\alpha (x) \, dx$
\begin{itemize}
\item[(a)] satisfy the recursion formula
\begin{equation*}
  \widetilde{\mu}_{k+2} = (2\alpha + k) \widetilde{\mu}_{k+1} + \alpha(1-k) \widetilde{\mu}_k
  \qquad(k\in \N_0),
\end{equation*}
\item[(b)] and we can start the recursion with
\begin{align}
\label{eq:WTmu1}
\widetilde{\mu}_1^\alpha 
&= 
 e^\alpha \alpha^\alpha \Gamma(1+\alpha) \Gamma(-\alpha,\alpha)
  ,\text{ and}\\
\notag
\wt\mu_0^\alpha 
&
= \Gamma(\alpha) - 2e^\alpha \alpha^\alpha \Gamma(\alpha+1) \Gamma(-\alpha,\alpha),
\end{align}
where the Gamma function is given by $\Gamma(x):=\int_0^\infty t^{x-1} e^{-t} dt$ and the incomplete Gamma function by $\Gamma(a,x) := \int_x^\infty t^{a-1} e^{-t} dt$ for $x>0$.
\end{itemize}
\end{theo}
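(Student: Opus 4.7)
The natural vehicle for part (a) is a Pearson-type (``symmetry'') equation satisfied by $W^\alpha$. A short direct calculation shows
\[
\frac{d}{dx}\bigl[x(x+\alpha)^2\,W^\alpha(x)\bigr] \;=\; \frac{d}{dx}\bigl[x^{\alpha+1}e^{-x}\bigr] \;=\; (\alpha+1-x)\,(x+\alpha)^2\,W^\alpha(x),
\]
so with $P(x)=x(x+\alpha)^2$ and $Q(x)=(\alpha+1-x)(x+\alpha)^2$ we have $(PW^\alpha)'=QW^\alpha$. This is the symmetry factor machinery alluded to in the introduction, adapted to the $X_1$-Laguerre weight. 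Fix $k\ge1$. I multiply both sides by $(x+\alpha)^k$ and integrate from $0$ to $\infty$; because $PW^\alpha=x^{\alpha+1}e^{-x}$ vanishes at both endpoints when $\alpha>0$, integration by parts on the left produces
\[
-k\int_0^\infty x(x+\alpha)^{k+1}W^\alpha(x)\,dx \;=\; \int_0^\infty (x+\alpha)^{k+2}(\alpha+1-x)W^\alpha(x)\,dx.
\]

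Next I use the identities $x=(x+\alpha)-\alpha$ and $\alpha+1-x=(2\alpha+1)-(x+\alpha)$ to rewrite both sides purely in terms of the adjusted moments $\widetilde\mu_j$. The left side becomes $-k(\widetilde\mu_{k+2}-\alpha\widetilde\mu_{k+1})$ and the right side becomes $(2\alpha+1)\widetilde\mu_{k+2}-\widetilde\mu_{k+3}$; solving for $\widetilde\mu_{k+3}$ and shifting the index yields precisely
\[
\widetilde\mu_{k+2} \;=\; (2\alpha+k)\widetilde\mu_{k+1} + \alpha(1-k)\widetilde\mu_k \qquad(k\ge 1).
\]
The $k=0$ instance needs a sanity check because the integration-by-parts step degenerates; but the integrand simplifies to $\widetilde\mu_2=\int_0^\infty x^\alpha e^{-x}\,dx=\Gamma(\alpha+1)$, and once the formula $\widetilde\mu_0=\Gamma(\alpha)-2\widetilde\mu_1$ from part (b) is available, the identity $2\alpha\widetilde\mu_1+\alpha\widetilde\mu_0=\alpha\Gamma(\alpha)=\Gamma(\alpha+1)$ confirms the recursion at $k=0$.

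For part (b), I attack $\widetilde\mu_1$ first using the Laplace representation $\frac{1}{x+\alpha}=\int_0^\infty e^{-(x+\alpha)t}\,dt$. By Fubini,
\[
\widetilde\mu_1 \;=\; \int_0^\infty e^{-\alpha t}\!\!\int_0^\infty x^\alpha e^{-(1+t)x}\,dx\,dt \;=\; \Gamma(\alpha+1)\int_0^\infty \frac{e^{-\alpha t}}{(1+t)^{\alpha+1}}\,dt;
\]
the substitutions $s=1+t$ and then $u=\alpha s$ transform the remaining integral into $e^\alpha\alpha^\alpha\int_\alpha^\infty u^{-\alpha-1}e^{-u}\,du$, which is precisely $e^\alpha\alpha^\alpha\,\Gamma(-\alpha,\alpha)$, giving the stated formula for $\widetilde\mu_1$. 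For $\widetilde\mu_0$ I integrate by parts with $u=x^\alpha e^{-x}$, $dv=(x+\alpha)^{-2}dx$; the boundary term vanishes for $\alpha>0$, leaving $\widetilde\mu_0=\alpha\int_0^\infty\frac{x^{\alpha-1}e^{-x}}{x+\alpha}dx-\widetilde\mu_1$. The algebraic identity $\frac{\alpha x^{\alpha-1}}{x+\alpha}=x^{\alpha-1}-\frac{x^\alpha}{x+\alpha}$ converts the remaining integral into $\Gamma(\alpha)-\widetilde\mu_1$, yielding $\widetilde\mu_0=\Gamma(\alpha)-2\widetilde\mu_1$ and completing part (b).

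The main obstacle I anticipate is locating the right Pearson factor: it is not obvious a priori that the non-polynomial weight $W^\alpha$ admits a polynomial pair $(P,Q)$ with $(PW^\alpha)'=QW^\alpha$ such that the subsequent integration by parts expresses everything \emph{in the adjusted moments alone}. The choice $P=x(x+\alpha)^2$ is essentially forced (it is the smallest polynomial killing the singular factor and leaving a classical exponential density), and the pivotal algebraic step is the twin rewriting of $x$ and $\alpha+1-x$ in the basis $\{1,(x+\alpha)\}$. For part (b), the only finesse is recognizing the incomplete Gamma function after the $u=\alpha s$ substitution in the evaluation of $\widetilde\mu_1$.
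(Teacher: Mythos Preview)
Your argument is correct and follows essentially the same route as the paper: a first--order differential identity for $W^\alpha$, paired against $(x+\alpha)^k$ via integration by parts, followed by the rewriting $x=(x+\alpha)-\alpha$. Two small differences are worth noting. First, the paper packages the differential identity as the Littlejohn ``symmetry equation'' $a_2W'+(a_2'-a_1)W=0$ with $a_2=-x$, whereas your Pearson form $(x(x+\alpha)^2W^\alpha)'=(\alpha+1-x)(x+\alpha)^2W^\alpha$ is more elementary (it is literally the product rule applied to the classical Laguerre weight $x^{\alpha+1}e^{-x}$); the two are equivalent, and your version avoids invoking the general theory. Second, the logical order for $\widetilde\mu_0$ and the $k=0$ recursion is reversed: the paper first proves the recursion for all $k\ge0$ and then reads off $\widetilde\mu_0=\Gamma(\alpha)-2\widetilde\mu_1$ from the $k=0$ instance, while you compute $\widetilde\mu_0$ directly (via integration by parts and the identity $\alpha x^{\alpha-1}/(x+\alpha)=x^{\alpha-1}-x^\alpha/(x+\alpha)$) and use it to confirm the $k=0$ recursion. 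Both orderings are valid.

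One bookkeeping slip: starting the integration at $k\ge1$ and then shifting $k\mapsto k-1$ gives the recursion only for $k\ge2$, not $k\ge1$ as you wrote. The fix is trivial---your integration step is perfectly valid at $k=0$ as well (the left side is then $[PW^\alpha]_0^\infty=0$, not a degeneration), and that case yields $\widetilde\mu_3=(2\alpha+1)\widetilde\mu_2$, i.e.\ the recursion at $k=1$. With that included, your argument together with the separate $k=0$ check covers all $k\ge0$.
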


We simplify notation by writing $W$ for $W^\alpha(x)$.

\begin{proof}
We begin the proof of part (a) by observing two facts.

First, for functions $f,g$ smooth on $[0,\infty)$ the moment functionals satisfy:
\[
\left\langle W', f \right\rangle = - \left\langle W, f' \right\rangle
\quad
\text{and}\quad
\left\langle g W, f \right\rangle = \left\langle W, fg \right\rangle
\]
(here $\langle\fdot, \fdot\rangle$ denotes the inner product with respect to Lebesgue measure on $[0,\infty)$).

Second, we learn from \cite{Littlejohn1984} that for a linear operator of the form $$\ell[y] = a_2y'' + a_1y' + a_0y,$$ the related symmetry equation is given by $$a_2y' + (a'_2 - a_1)y = 0,$$ and that it is solved by the weight function (with respect to which the eigen-polynomials are orthogonal).

That is
\[
a_2W' + (a_2' - a_1)W=0.
\]
And together with
\begin{align*}
\left\langle a_2W', \; (x+\alpha)^k \right\rangle
&=
\left\langle W', \; a_2(x+\alpha)^k \right\rangle
=
-\left\langle W, \; (a_2(x+\alpha)^k)' \right\rangle\\
&=
-k\left\langle W, \; a_2(x+\alpha)^{k-1} \right\rangle - \left\langle W, \; a_2'(x+\alpha)^k \right\rangle
\end{align*}
we find for $k\in \N$:
\begin{align}
\notag
0 
&= \left\langle a_2W' + (a_2' - a_1)W, \; (x+\alpha)^k \right\rangle\\
\notag
&=
\left\langle a_2W', \; (x+\alpha)^k \right\rangle +  \left\langle a_2'W, \; (x+\alpha)^k \right\rangle - \left\langle a_1W, \; (x+\alpha)^k \right\rangle\\
\label{eq:innerprod}
&=
-k\left\langle W, \; a_2(x+\alpha)^{k-1} \right\rangle - \left\langle a_1W, \; (x+\alpha)^k \right\rangle.
\end{align}

In our case, with the exceptional $X_1$-Laguerre expression \eqref{eq:diff} the coefficients are $$a_2 = -x\quad\text{and}\quad a_1 = x - \alpha - 1 + \frac{2x}{x+\alpha}\,,
$$
or equivalently,
$$
a_2 = -(x+\alpha) + \alpha\quad\text{and}\quad
a_1 = (x + \alpha) - 2\alpha + 1 - \frac{2\alpha}{x+\alpha}\,.
$$

We substitute these into \eqref{eq:innerprod}, and collect the coefficients of terms of the form $\widetilde{\mu}_k = \left\langle W, (x+\alpha)^k \right\rangle$ for $k=k-1, k, k+1$ to obtain:
\begin{align*}
0 
&=
 -k\left\langle W, \; (-(x+\alpha) + \alpha)(x+\alpha)^{k-1} \right\rangle \\
 &\quad
\,- \left\langle \left( (x + \alpha) - 2\alpha + 1 - \frac{2\alpha}{x+\alpha} \right) \! W, \; (x+\alpha)^k \right\rangle\\
&= 
[-k\alpha+2\alpha]\wt\mu_{k-1}
+
[k+2\alpha]\wt\mu_{k}
-
\wt\mu_{k+1}.
\end{align*}

We solve for $\wt\mu_{k+1}$
\[
\widetilde{\mu}_{k+1} = (2\alpha + k - 1) \widetilde{\mu}_k + \alpha(2-k) \widetilde{\mu}_{k-1}.
\]
Finally, shifting the index up by one, we see part (a) of the theorem.

We proceed to prove part (b). We recall the definition of $\Gamma(x)$ and $\Gamma(a,x)$, and let 
\[ E_a(x) = \int_1^\infty e^{-x t} t^{-a} dt ,\quad x>0 \]
denote the exponential integral function.
The two classes of functions are
related by
\[ E_a(x) = x^{a-1} \Gamma(1-a,x).\]
We also have the identity
\begin{equation}
  \label{eq:Earecursion}
  (a-1) E_a(x) = e^{-x} - x E_{a-1}(x).
\end{equation}

Our first claim is
\begin{align}
  \label{eq:claim1}
  \int_0^\infty \frac{e^{-x} x^\beta}{(x+\alpha)} dx 
  &= e^\alpha\, E_{1+\beta}(\alpha)\, \Gamma(1+\beta),\quad
  \alpha>0,\beta >-1.
\end{align}
The claim is established by the following chain of manipulations:
\begin{align*}
  \int_0^\infty \frac{e^{-x} x^\beta}{(x+\alpha)} dx &
  =
  \int_1^\infty e^{\alpha(1- t)} (\alpha (t-1))^\beta t^{-1} dt 
  \\
  &
  =  \int_\alpha^\infty e^{\alpha-s} (s-\alpha)^\beta s^{-1} ds \\
  &= e^\alpha \int_\alpha^\infty \int_1^\infty e^{-st} (s-\alpha)^\beta dt ds
  \\
  &
  = e^\alpha \int_1^\infty \int_0^\infty e^{-t\alpha-u} (u/t)^\beta t^{-1} du dt\\
  &= e^\alpha \int_1^\infty e^{-t\alpha} t^{-\beta-1} dt \int_0^\infty
  e^{-u} u^\beta du.
\end{align*}

As an immediate consequence, we obtain the following expressions for
the first adjusted moment:
\begin{align*}
  \wt\mu_1 &= e^\alpha E_{1+\alpha}(\alpha) \Gamma(1+\alpha)
  \\
  &
  = \Gamma(\alpha)(1 - \alpha e^\alpha E_\alpha(\alpha))
  \\
  &
  = e^\alpha \alpha^\alpha \Gamma(1+\alpha) \Gamma(-\alpha,\alpha).
\end{align*}

Further, notice that
\begin{align}
\label{WTmu2}
\wt\mu_2 = \int_0^\infty (x+\alpha)^2 \frac{x^{\alpha}e^{-x}}{(x+\alpha)^2}dx  = \int_0^\infty x^\alpha e^{-x} dx= \Gamma(\alpha+1).
\end{align}

With part (a) for $k=0$, we obtain $\Gamma(\alpha+1) = 2\alpha\wt\mu_1 +\alpha\wt\mu_0$ or equivalently
\[
\wt\mu_0
=
\Gamma(\alpha+1)/\alpha - 2\wt\mu_1 = \Gamma(\alpha) - 2e^\alpha \alpha^\alpha \Gamma(\alpha+1) \Gamma(-\alpha,\alpha).
\]
The theorem is proved.
\end{proof}

Equation \eqref{WTmu2} is of interest by itself.

\begin{rem*}
R.~Milson contributed the evaluation of the moment $\wt\mu_1^\alpha$.
\end{rem*}

In the remainder of this subsection, we express the $(k+2)$-nd adjusted moment $\wt\mu_{k+2}$ ``directly" in two ways. 
Define the matrix
\[
B_n:=
\left[ \begin{array}{cc} 2\alpha+n &\alpha(1-n) \\ 1&0 \end{array} \right].
\]

\begin{cor}
\label{corollary}
For $k\ge2$
\[
\left[ \begin{array}{c} \wt\mu_{k+2} \\ \wt\mu_{k+1} \end{array} \right]
=
\Gamma(\alpha+1)\left(\prod_{n=2}^k B_n\right)
\left[ \begin{array}{c} 2\alpha+1 \\ 1 \end{array} \right].
\]
Further, we have $\wt\mu_2 = \Gamma(\alpha+1)$ and $\wt\mu_3 = (2\alpha+1)\Gamma(\alpha+1).$

(The moments $\wt\mu_0$ and $\wt\mu_1$ were given explicitly in Theorem \ref{t-AdMom}.)
\end{cor}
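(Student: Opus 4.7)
The plan is to read the second-order linear recursion of Theorem \ref{t-AdMom}(a) as a one-step matrix recursion on consecutive pairs of adjusted moments. Setting $v_k := \bigl(\wt\mu_{k+2},\,\wt\mu_{k+1}\bigr)^{T}$, the identity $\wt\mu_{k+2} = (2\alpha+k)\wt\mu_{k+1} + \alpha(1-k)\wt\mu_k$ is literally $v_k = B_k v_{k-1}$. Unrolling this one step at a time from the base level gives $v_k = B_k B_{k-1}\cdots B_2\, v_1$, which (after identifying $v_1$ in closed form) is the advertised formula, under the convention that $\prod_{n=2}^k B_n$ denotes the left-to-right decreasing product $B_k B_{k-1}\cdots B_2$.

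First I would pin down $v_1$. Equation \eqref{WTmu2} already gives $\wt\mu_2 = \Gamma(\alpha+1)$ by direct cancellation of $(x+\alpha)^2$ against the denominator of the weight. Specializing Theorem \ref{t-AdMom}(a) to $k=1$ then kills the $\wt\mu_1$ contribution, since the coefficient $\alpha(1-k)$ vanishes there, and yields $\wt\mu_3 = (2\alpha+1)\wt\mu_2 = (2\alpha+1)\Gamma(\alpha+1)$. This vanishing of the lower coefficient at $k=1$ is precisely the algebraic reason the tail $\{\wt\mu_k\}_{k\ge 2}$ decouples from the transcendental pair $\wt\mu_0,\wt\mu_1$, consistent with the ``detachedness'' observation made earlier in the subsection.

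With $v_1 = \Gamma(\alpha+1)\bigl(2\alpha+1,\,1\bigr)^{T}$ in hand, the remainder is a one-line induction on $k\ge 2$: the base case $k=2$ is $v_2 = B_2 v_1$, read directly off the recursion at $k=2$, and the inductive step is $v_k = B_k v_{k-1}$, read off the recursion at level $k$. I do not anticipate a real obstacle here; the only point requiring care is bookkeeping the non-commutative ordering of the factors $B_n$, which I would state explicitly before displaying the product so that $\prod_{n=2}^k B_n$ is unambiguously interpreted as $B_k B_{k-1}\cdots B_2$ rather than its reverse.
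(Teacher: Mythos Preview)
Your proposal is correct and follows essentially the same approach as the paper: rewrite the three-term recursion as the matrix step $v_k = B_k v_{k-1}$, iterate down to $v_1$, and evaluate $v_1$ using $\wt\mu_2 = \Gamma(\alpha+1)$ together with the $k=1$ instance of the recursion (where the $\wt\mu_1$ coefficient vanishes). Your explicit remark on the ordering convention for $\prod_{n=2}^k B_n$ is a useful clarification that the paper leaves implicit.
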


\begin{rem*}
The first two moments $\wt\mu_0$ and $\wt\mu_1$ occur somewhat disconnected from the other moments $\wt\mu_k$ for $k\ge 2$. The forward recursion can be started from $k=2$, and that the formulas for $\wt\mu_0$ and $\wt\mu_1$ contain the incomplete gamma function $\Gamma.$ This observation also reflects the fact that the exceptional $X_1$-Laguerre polynomials do not have the degree zero polynomial.
\end{rem*}

\begin{proof}
By the recursion formula in Theorem \ref{t-AdMom} we can set up a discrete dynamical system to express
\[
\left[ \begin{array}{c} \wt\mu_{k+2} \\ \wt\mu_{k+1} \end{array} \right]
=
B_k
\left[ \begin{array}{c} \wt\mu_{k+1} \\ \wt\mu_{k} \end{array} \right].
\]

Repeated application yields
\begin{align}
\label{e-prodA}
\left[ \begin{array}{c} \wt\mu_{k+2} \\ \wt\mu_{k+1} \end{array} \right]
=
\left(\prod_{n=2}^k B_n\right)
\left[ \begin{array}{c} \wt\mu_{3} \\ \wt\mu_{2} \end{array} \right].
\end{align}

We recall that $\wt\mu_2 = \Gamma(\alpha+1)$ by equation \eqref{WTmu2}. Further with the moment recursion formula in Theorem \ref{t-AdMom} for $k=1$ we have
\begin{align}
\label{wt2and3}
\wt\mu_3 = (2\alpha+1)\wt\mu_2 = (2\alpha+1)\Gamma(\alpha+1).
\end{align}

Substitution into the vector on the right hand side of \eqref{e-prodA} yields the corollary.
\end{proof}

We use the standard technique of generating functions (see e.g.~\cite{IsmailBOOK}) to find an explicit expression of the moment.

\begin{theo}
\label{t-generating}
The adjusted moments $\wt\mu_k= \int_0^\infty (x+\alpha)^k \, W^\alpha (x) \, dx$ are given in hypergeometric notation by
\[
\wt\mu_{k+2}^\alpha
 = (-1)^k \Gamma(\alpha+1)
 (-\alpha-k)_k  \,\,
 {}_1F_1(-k,-\alpha-k;\alpha)
   \qquad(k\in \N_0),
\]
where we used the Pochhammer symbol
$$
(x)_n
: =
\left\{
\begin{array}{ll}
1 & \text{for }n=0\\
x(x+1)\cdot\hdots\cdot(x+n-1)
&
\text{for }n>0.
\end{array}
\right.
$$

(Again, the moments $\wt\mu_0^\alpha$ and $\wt\mu_1^\alpha$ cannot be obtained in this fashion, but their values are given in Theorem \ref{t-AdMom}.)
\end{theo}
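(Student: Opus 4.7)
The plan is to introduce the exponential generating function
$$G(z) \,:=\, \sum_{k=0}^\infty \wt\mu_{k+2}^\alpha\, \frac{z^k}{k!},$$
compute it in closed form, and then extract its Taylor coefficients.

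First I would observe that two of the three powers of $(x+\alpha)$ in the integrand cancel against the denominator of $W^\alpha$, yielding the simplified representation
$$\wt\mu_{k+2}^\alpha \,=\, \int_0^\infty (x+\alpha)^k\, x^{\alpha} e^{-x}\, dx \qquad (k\ge 0).$$
Swapping summation and integration (justified for $|z|<1$ by absolute convergence) collapses the series into a single Gamma integral:
$$G(z) \,=\, \int_0^\infty e^{z(x+\alpha)}\, x^{\alpha} e^{-x}\, dx \,=\, e^{\alpha z}\! \int_0^\infty x^{\alpha} e^{-(1-z)x}\, dx \,=\, \frac{\Gamma(\alpha+1)\, e^{\alpha z}}{(1-z)^{\alpha+1}}.$$
One can alternatively arrive at the same closed form without the integral: setting $h_k := \wt\mu_{k+2}$ and using the recursion in Theorem \ref{t-AdMom}(a), term-by-term differentiation shows that $G$ solves the separable linear ODE $(1-z)G'(z) = (2\alpha+1-\alpha z)G(z)$ with $G(0)=\Gamma(\alpha+1)$; the identity $\wt\mu_3 = (2\alpha+1)\wt\mu_2$ from \eqref{wt2and3} is precisely what forces the non-recursive boundary terms to drop out so that a single ODE emerges.

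Next, I would expand the two factors separately as power series,
$$e^{\alpha z} \,=\, \sum_{j\ge 0} \frac{\alpha^j}{j!}z^j, \qquad (1-z)^{-\alpha-1} \,=\, \sum_{m\ge 0}\frac{(\alpha+1)_m}{m!} z^m,$$
multiply them by the Cauchy product, and read off the coefficient of $z^k/k!$ to obtain the explicit binomial sum
$$\wt\mu_{k+2}^\alpha \,=\, \Gamma(\alpha+1)\sum_{j=0}^k \binom{k}{j}\, \alpha^j\, (\alpha+1)_{k-j}.$$

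The final step is to rewrite this finite sum as the advertised confluent hypergeometric series. Using $(-1)^k (-\alpha-k)_k = (\alpha+1)_k = \Gamma(\alpha+k+1)/\Gamma(\alpha+1)$ together with $(-k)_j = (-1)^j k!/(k-j)!$ and $(-\alpha-k)_j = (-1)^j\Gamma(\alpha+k+1)/\Gamma(\alpha+k-j+1)$, the change of summation index $i = k-j$ matches the Cauchy-product sum term-by-term with $(-1)^k\Gamma(\alpha+1)(-\alpha-k)_k \,{}_1F_1(-k,-\alpha-k;\alpha)$. I expect the main obstacle to be not the generating-function calculation itself but this last bookkeeping: the Pochhammer identities and the correct reindexing have to be juggled carefully to see that the plain binomial sum coincides with the stated ${}_1F_1$. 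Every other ingredient—the cancellation in the integrand, the Gamma integral, and the Cauchy product—is entirely routine.
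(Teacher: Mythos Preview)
Your proof is correct and follows the same generating-function route as the paper: both define $G(t)=\sum_{k\ge 0}\wt\mu_{k+2}\,t^k/k!$, obtain the closed form $\Gamma(\alpha+1)\,e^{\alpha t}(1-t)^{-\alpha-1}$, and expand to reach the binomial sum before converting to the ${}_1F_1$ expression. The only difference is that the paper computes $G$ solely by deriving and solving the ODE $(1-t)G'+(\alpha t-2\alpha-1)G=0$ from the recursion, whereas you evaluate $G$ directly from the integral representation (a cleaner shortcut) and mention the ODE only as an alternative; the remaining bookkeeping is identical.
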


We check that indeed $\wt\mu_2^\alpha = \Gamma(\alpha+1)$ and $\wt\mu_3^\alpha = (2\alpha+1) \Gamma(\alpha+1)$.
The hypergeometric lay may consult equation \eqref{e-nonhyper} below to find an alternative expression for the adjusted moments without hypergeometric notation.

\begin{proof}
We begin by re-writing the recurrence relation from Theorem \ref{t-AdMom}:
\begin{equation}
\label{e-AdMom}
  \widetilde{\mu}_{k+2} = (2\alpha + k) \widetilde{\mu}_{k+1} + \alpha(1-k) \widetilde{\mu}_k
  \qquad(k\in \N_0)
\end{equation}
to obtain
\begin{align}
\label{e-newRela}
(k+1)\nu_{k+1}
=
(2\alpha +k+1) \nu_k
-
\alpha \nu_{k-1}
  \qquad(k\in \N),
\end{align}
where
\begin{align}
\label{e-munu}
\wt\mu_{k+2} = k!\nu_k
  \qquad(k \in\N_0).
\end{align}

To see this we first replace $\tau_k:=\wt\mu_{k+2}$ in \eqref{e-AdMom} to see
\begin{equation*}
  \tau_{k} = (2\alpha + k) \tau_{k-1} + \alpha(1-k) \tau_{k-2}
  \qquad(k = 2, 3, 4, \hdots).
\end{equation*}
Shifting the index $k\mapsto k+1$ yields
\begin{equation*}
  \tau_{k+1} = (2\alpha + k+1) \tau_{k} - \alpha k \tau_{k-1}
  \qquad(k \in\N).
\end{equation*}
We define the sequence $\{\nu_k\}_{k\in \N}$ by $\tau_k = k! \nu_k$ and so 
\[
(k+1)! \nu_{k+1} = (2\alpha + k+1) k! \nu_{k} - \alpha k! \nu_{k-1} \qquad(k \in\N).
\]
We divide the equation by $k!$ to arrive at the desired relation \eqref{e-newRela}.

Our next goal is to write a related first order differential equation.
In order to achieve this, we multiply relation \eqref{e-newRela} by the factor $t^k$ and sum up for $k\in \N$:
\begin{align}
\label{e-sum}
\sum_{k=1}^\infty
(k+1)\nu_{k+1} t^k
=
\sum_{k=1}^\infty
(2\alpha +k+1) \nu_k t^k
-
\sum_{k=1}^\infty
\alpha \nu_{k-1} t^k
.\end{align}

In what follows, $t$ is treated as the independent variable. 
We define the generating function
\begin{align}
\label{e-GBC}
G(t) := 
\sum_{k=0}^\infty
\nu_k t^k.
\end{align}
Note that the moments can be expressed by
\begin{align}
\label{e-MomentG}
\wt\mu^\alpha_{k+2}
=
k! \nu_k
=
G^{(k)}(0).
\end{align}
In order to write \eqref{e-sum} using $G(t),$ we substitute
\begin{align*}
 \sum_{k=1}^\infty
(k+1)\nu_{k+1} t^{k} 
&
= G'(t) -\nu_1
\end{align*}
on the left hand side, and
\begin{align*}
(2\alpha+1)
\sum_{k=1}^\infty
\nu_{k} t^{k} 
+\sum_{k=1}^\infty
k \nu_k t^k
&
=
 (2\alpha+1)[G(t) - \nu_0]
 +tG'(t), \text{ as well as}
\\
-
\sum_{k=1}^\infty
\alpha \nu_{k-1} t^k
=
-\alpha t
\sum_{k=0}^\infty \nu_{k} t^k
&
=
-\alpha t G(t).
\end{align*}
Apriori, we expect a first order \emph{inhomogeneous} differential equation. However, when we collect the terms without the generating function (i.e.~the inhomogeneity) we see
\[
\nu_1 - (2\alpha+1) \nu_0 = 
\wt\mu_3 - (2\alpha+1) \wt\mu_2 = 
0
\]
by Equation \eqref{wt2and3}. We obtain the first order \emph{homogeneous} differential equation
\begin{align}
\label{e-firstorderDE}
(1-t)G'(t)+(\alpha t- 2\alpha -1)G(t)
=
0.
\end{align}
The boundary condition
\begin{align}
\label{e-BC}
G(0) = \nu_0 = \wt\mu^\alpha_2 = \Gamma(\alpha+1)
\end{align}
follows from \eqref{e-GBC} for $t=0$.

It is not hard to see that (near $t=0$) the generating function
\[
G(t) = \Gamma(\alpha+1) e^{\alpha(t-1)} (1-t)^{-(\alpha+1)}
\]
solves this boundary value problem.

With the Leibniz rule and elementary simplifications we derive
\begin{align}
\label{e-nonhyper}
\wt\mu_{k+2}^\alpha
 = 
  \Gamma(\alpha+1)
\sum_{m=0}^{k}
(-1)^{k-m} \binom{k}{m}\alpha^m
(-\alpha-k+m)_{k-m}
   \qquad(k\in \N_0).
\end{align}
The theorem now follows from the standard hypergeometric notation by using the identity
\[
(-\alpha-k+m)_{k-m}
=\frac{
(-\alpha-k)_k}{(-\alpha-k)_m}\,.
\]
(We note that the numerator in the latter expression is independent of the summation variable $m$.)
\end{proof}

\begin{rem*}
This method along with a first version of the first order differential equation was mentioned to us by M.E.H.~Ismail.
\end{rem*}

%%%%%%%%%%%%%%%%%%%%%%%%%%%%%%%%%%%%%%%%%%%%%%%%%%
\subsection{Exceptional Moments}
%%%%%%%%%%%%%%%%%%%%%%%%%%%%%%%%%%%%%%%%%%%%%%%%%%

Starting from the recursion formula in Theorem \ref{t-AdMom} for the adjusted moments, we now derive a recursion formula for the exceptional moments by using the binomial formula.

\begin{theo}
\label{t-ExMom}
The moments $\mu_k = \int_0^\infty x^k \, W^\alpha (x) \, dx$
\begin{itemize}
\item[(a)]
satisfy the recursion formula
$$
\mu_{k+2} = \sum_{m=0}^k \left[ (2\alpha + k)\binom{k+1}{m} - \alpha \binom{k+2}{m} + (1-k) \binom{k}{m} \right] \alpha^{k+1-m} \, \mu_m + \, (1 - \alpha) k\mu_{k+1}
$$
for $k\in \N_0$.
Specifically, when $k = 0$, we find that $\mu_2 = \alpha(\alpha + 1)\mu_0$.
\item[(b)] with
\begin{align*}
\mu_0^\alpha 
&=
\Gamma(\alpha) - 2e^\alpha \alpha^\alpha \Gamma(\alpha+1) \Gamma(-\alpha,\alpha)
\\
\mu_1^\alpha 
&= 
-\Gamma(\alpha+1)
+[2\alpha+1]e^\alpha \alpha^\alpha \Gamma(\alpha+1)\Gamma(-\alpha, \alpha).
\end{align*}
\end{itemize}
\end{theo}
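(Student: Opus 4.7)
The plan is to deduce the exceptional moment recursion by combining the adjusted moment recursion from Theorem \ref{t-AdMom}(a) with the binomial identity
\[
\wt\mu_j = \int_0^\infty ((x-(-\alpha))+\fdot\cdot)^j\, W^\alpha(x)\,dx = \sum_{m=0}^{j}\binom{j}{m}\alpha^{j-m}\mu_m,
\]
which is immediate from expanding $(x+\alpha)^j$. Substituting this expression into each of the three terms of the adjusted recursion
\[
\wt\mu_{k+2} = (2\alpha + k)\wt\mu_{k+1} + \alpha(1-k)\wt\mu_k
\]
produces an identity in which only the left-hand side contains the moment $\mu_{k+2}$ (with coefficient $\binom{k+2}{k+2}=1$). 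Solving for $\mu_{k+2}$ and collecting the coefficients of $\mu_m$ for each $m$ should yield the stated formula.

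First I would extract the $\mu_{k+1}$ coefficient: the left-hand side contributes $\binom{k+2}{k+1}\alpha = (k+2)\alpha$, while the right-hand side contributes $(2\alpha+k)\binom{k+1}{k+1} = 2\alpha+k$, so that after transposition the net coefficient is $(2\alpha+k)-(k+2)\alpha = (1-\alpha)k$, matching the claim. For $m\leq k$ the contribution from the LHS after factoring out $\alpha^{k+1-m}$ is $-\alpha\binom{k+2}{m}$, while the RHS contributes $(2\alpha+k)\binom{k+1}{m}$ from the $\wt\mu_{k+1}$ term and $(1-k)\binom{k}{m}$ from the $\wt\mu_k$ term (noting that the factor $\alpha$ in $\alpha(1-k)$ merges with $\alpha^{k-m}$). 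Adding these three contributions gives precisely the bracketed coefficient appearing in the theorem. The sanity check $k=0$ reduces to $\mu_2 = [2\alpha - \alpha + 1]\alpha\mu_0 = \alpha(\alpha+1)\mu_0$, as stated.

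For part (b), the key observation is that $(x+\alpha)^0 \equiv x^0 \equiv 1$, so $\mu_0 = \wt\mu_0$, giving the formula for $\mu_0^\alpha$ directly from Theorem \ref{t-AdMom}(b). For $\mu_1$, the binomial formula yields $\wt\mu_1 = \mu_1 + \alpha\mu_0$, whence
\[
\mu_1 = \wt\mu_1 - \alpha\mu_0 = e^\alpha\alpha^\alpha\Gamma(\alpha+1)\Gamma(-\alpha,\alpha) - \alpha\bigl[\Gamma(\alpha) - 2e^\alpha\alpha^\alpha\Gamma(\alpha+1)\Gamma(-\alpha,\alpha)\bigr].
\]
Using $\alpha\Gamma(\alpha) = \Gamma(\alpha+1)$ and collecting like terms produces the claimed expression $-\Gamma(\alpha+1) + (2\alpha+1)e^\alpha\alpha^\alpha\Gamma(\alpha+1)\Gamma(-\alpha,\alpha)$.

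There is no genuine analytical obstacle here; the entire argument is algebraic, and the hard work (namely, the symmetry-factor computation yielding the adjusted recursion and the incomplete-Gamma evaluation of $\wt\mu_1$) is already done in Theorem \ref{t-AdMom}. The main piece of care required is bookkeeping: one must track the $m=k+1$ and $m\leq k$ ranges separately (since $\binom{k}{k+1}=0$ but $\binom{k+1}{k+1}=\binom{k+2}{k+1}\neq 0$), and one must correctly factor out the common power $\alpha^{k+1-m}$ from all three binomial contributions before reading off the coefficient.
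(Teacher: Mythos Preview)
Your proposal is correct and follows essentially the same route as the paper: expand each $\wt\mu_j$ via the binomial formula $\wt\mu_j=\sum_{m=0}^{j}\binom{j}{m}\alpha^{j-m}\mu_m$, substitute into the adjusted recursion from Theorem~\ref{t-AdMom}(a), isolate $\mu_{k+2}$, and separately collect the $m=k+1$ and $m\le k$ coefficients; part~(b) is handled identically via $\mu_0=\wt\mu_0$ and $\mu_1=\wt\mu_1-\alpha\mu_0$ together with $\alpha\Gamma(\alpha)=\Gamma(\alpha+1)$.
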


\begin{proof}
To prove part (b) note that
\[
\mu_0^\alpha = \int_0^\infty x^0 W^\alpha(x)dx  = \int_0^\infty (x+\alpha)^0 W^\alpha(x)dx = \widetilde{\mu}_0^\alpha 
\]
and the first statement follows. To see the second statement
\[
\wt\mu_1^\alpha = \int_0^\infty (x+\alpha) W^\alpha(x)dx = \int_0^\infty x^1 W^\alpha(x)dx+ \alpha\int_0^\infty x^0 W^\alpha(x)dx=  \mu_1^\alpha +\alpha \mu_0^\alpha.
\]
So we have
\[
\mu_1^\alpha 
= \wt\mu_1^\alpha- \alpha \mu_0^\alpha.
\]
Substituting \eqref{eq:WTmu1} as well as the formula for $\mu_0^\alpha$ we obtain part (b).

Part (a) follows from lengthy but elementary computations, which we present for the convenience of the reader.
The binomial theorem yields
\begin{align}
\notag
\widetilde{\mu}_k 
&= 
\int_0^\infty \left(\sum_{m=0}^k \binom{k}{m} x^m \alpha^{k-m}\right) W^\alpha (x) \, dx\\
\notag
&=
\sum_{m=0}^k \binom{k}{m} \alpha^{k-m} \int_0^\infty x^m W^\alpha (x) \, dx\\
\label{eq:Bin}
\widetilde{\mu}_k 
&=
\sum_{m=0}^k \binom{k}{m} \, \mu_m \, \alpha^{k-m}.
\end{align}
By splitting off the last term see that
\[
\widetilde{\mu}_{k+2} = \sum_{m=0}^{k+2} \binom{k+2}{m} \, \mu_m \, \alpha^{k+2-m}
 = \sum_{m=0}^{k+1} \binom{k+2}{m} \, \mu_m \, \alpha^{k+2-m} +  \mu_{k+2},
 \]
that is,
\[
\mu_{k+2} = -\sum_{m=0}^{k+1} \binom{k+2}{m} \, \mu_m \, \alpha^{k+2-m} + \, \widetilde{\mu}_{k+2}.
\]

Replacing $\wt\mu_{k+2}$ by the recursion formula for the adjusted moments in Theorem \ref{t-AdMom} and using \eqref{eq:Bin} twice, we have 
  \begin{align}
  \notag
  \mu_{k+2} =
  & -\sum_{m=0}^{k+1} \binom{k+2}{m} \, \mu_m \, \alpha^{k+2-m}
  + (2\alpha+k) \wt\mu_{k+1}
  + \, \alpha(1-k) \wt\mu_k
  \\
  \label{eq;plugin}
  \mu_{k+2}   =
  & -\sum_{m=0}^{k+1} \binom{k+2}{m} \, \mu_m \, \alpha^{k+2-m}\\
  & + \, (2\alpha+k) \sum_{m=0}^{k+1} \binom{k+1}{m} \, \mu_m \, \alpha^{k+1-m} \\
  \label{eq;pluginA}
  &+ \, \alpha(1-k) \sum_{m=0}^k \binom{k}{m} \, \mu_m \, \alpha^{k-m}.
 \end{align}

Finally, we combine the sums. This is done by using
\begin{align*}
&
-\sum_{m=0}^{k+1} \binom{k+2}{m} \, \mu_m \, \alpha^{k+2-m}
+2\alpha \sum_{m=0}^{k+1} \binom{k+1}{m} \, \mu_m \, \alpha^{k+1-m}\\
=&
\alpha \left[-\sum_{m=0}^{k+1} \binom{k+2}{m} \, \mu_m \, \alpha^{k+1-m}
+ 2 \sum_{m=0}^{k+1} \binom{k+1}{m} \, \mu_m \, \alpha^{k+1-m}\right]\\
=&
\alpha \sum_{m=0}^{k+1} \left[ 2\binom{k+1}{m} - \binom{k+2}{m} \right] \mu_m \, \alpha^{k+1-m}\\
=&
-k \alpha \mu_{k+1} 
+\alpha \sum_{m=0}^{k} \left[ 2\binom{k+1}{m} - \binom{k+2}{m} \right] \mu_m \, \alpha^{k+1-m}
\end{align*}
--- where the term in square brackets for $m=k+1$ evaluated as follows
\[
\left[ 2\binom{k+1}{k+1} - \binom{k+2}{k+1} \right]
= 2 -(k+2)=-k
\]
--- as well as 
\begin{align*}
&k \sum_{m=0}^{k+1} \binom{k+1}{m} \, \mu_m \, \alpha^{k+1-m}+ \, \alpha(1-k) \sum_{m=0}^k \binom{k}{m} \, \mu_m \, \alpha^{k-m}\\
= &
 k\mu_{k+1}+
 l \sum_{m=0}^k \binom{k+1}{m} \, \mu_m \, \alpha^{k+1-m} + \, (1-k) \sum_{m=0}^k \binom{k}{m} \, \mu_m \, \alpha^{k+1-m}\\
=&
 k\mu_{k+1}
 +\sum_{m=0}^{k} \left[ k \binom{k+1}{m} +(1- k) \binom{k}{m}  \right] \mu_m \, \alpha^{k+1-m}.
\end{align*}

Substitution into  \eqref{eq;plugin} through \eqref{eq;pluginA} yields
  \begin{align}
  \notag
  \mu_{k+2} =
  &-k \alpha \mu_{k+1} 
+\alpha \sum_{m=0}^{k} \left[ 2\binom{k+1}{m} - \binom{k+2}{m} \right] \mu_m \, \alpha^{k+1-m}\\
&+
 k\mu_{k+1}
 +\sum_{m=0}^{k} \left[ k \binom{k+1}{m} +(1- l) \binom{k}{m}  \right] \mu_m \, \alpha^{k+1-m}
  \end{align}
and we can easily verify the coefficients in the theorem.
\end{proof}

%%%%%%%%%%%%%%%%%%%%%%%%%%%%%%%%%%%%%%%%%%%%%%%%%
\section{Alternative Representation of $L_{n}^{\alpha}$ with Adjusted Moments}\label{s-AltRep}
%%%%%%%%%%%%%%%%%%%%%%%%%%%%%%%%%%%%%%%%%%%%%%%%%
In Section \ref{s-recursion} we saw that the adjusted moments can be computed much more easily. Here we obtain the more elegant representation for the exceptional $X_1$-Laguerre polynomials.

We begin by expressing the $X_1$-Laguerre orthogonal polynomials in terms of powers of $(x+\alpha)$.
Consider
\[
L_{n}^{\alpha}(x) = \sum_{k=0}^n a_{nk} (x+\alpha)^k.
\]
As in Section \ref{s-FirstRep}, the idea is to express the coefficients $a_{nk}$, $k=0,1, \hdots, n$, via a system of $n+1$ linear equations $\wt Aa = b$, where
\[
a = \left[ \begin{array}{c} a_{n0} \\ a_{n1} \\ \vdots \\ a_{nn} \end{array} \right]\in \R^{n+1}
\qquad\text{and}\qquad
b = \left[ \begin{array}{c} 0 \\ \vdots \\ 0 \\ \wt K_n \end{array} \right]\in \R^{n+1},
\]
with $\wt K_n \neq 0$ and where $\wt A$ is given in the next theorem:

\begin{theo}
\label{t-AltRep}
The exceptional $X_1$-Laguerre polynomials admit the representation
\[
L_{n}^\alpha(x)
= \frac{1}{\det  \, \wt A} \sum_{k=0}^n \left(\det  \, \wt A_k\right) (x+\alpha)^k
 = \frac{\wt K_n}{\det  \, \wt A}  
\left| \begin{array}{c} \left(\text{First }n\text{ rows of the matrix }\wt A\right) \\ 1 \,\, (x+\alpha) \,\, (x+\alpha)^2 \,\, \hdots \,\, (x+\alpha)^n \\ \end{array} \right|
\]
for $n\in\N$, where
where
\[
\wt A= 
 \left[ \begin{array}{ccccc}
-1 & 1 & 0 &\hdots & 0 \\
\widetilde{\mu}_0 + \widetilde{\mu}_1 & 
\widetilde{\mu}_1 + \widetilde{\mu}_2 & \hdots &\hdots & \widetilde{\mu}_n + \widetilde{\mu}_{n+1} \\
\widetilde{\mu}_2 & \widetilde{\mu}_3 & \hdots &\hdots & \widetilde{\mu}_{n+2} \\
\vdots & \vdots &  &&  \vdots \\
\widetilde{\mu}_n & \widetilde{\mu}_{n+1} & \hdots &\hdots & \widetilde{\mu}_{2n} \\ \end{array} \right],
 \]
 the exceptional moments are given by $\wt\mu_k = \int_0^\infty (x+\alpha)^k W^\alpha(x) dx$,
 and where the matrix $\wt A_k$ is obtained from $\wt A$ by replacing the $(k+1)$-st column with the vector $b$; as is done in Cramer's rule.
 (In Section \ref{s-recursion} we found a recursion formula for the moments $\wt\mu_k$.)
\end{theo}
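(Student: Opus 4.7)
The plan is to mimic the strategy of the proof of Theorem \ref{t-FirstRep}, but now expanding in the basis $\{(x+\alpha)^k\}_{k=0}^n$ instead of $\{x^k\}_{k=0}^n$. Writing $L_n^\alpha(x) = \sum_{k=0}^n a_{nk}(x+\alpha)^k$, I will translate the defining properties of $L_n^\alpha$ into $n+1$ linear equations on the coefficients $a_{nk}$ and read off the rows of $\wt A$, then apply Cramer's rule exactly as before.

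The first row comes from the exceptional condition. Since $(x+\alpha)^k|_{x=-\alpha} = 0$ for $k\ge 1$ and its derivative equals $k(x+\alpha)^{k-1}|_{x=-\alpha}$, which vanishes except when $k=1$, the values $L_n^\alpha(-\alpha) = a_{n0}$ and $(L_n^\alpha)'(-\alpha) = a_{n1}$ reduce condition \eqref{eq:span} to $-a_{n0} + a_{n1} = 0$. That produces the first row $[-1,\,1,\,0,\,\hdots,\,0]$ of $\wt A$.

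The remaining $n$ rows come from the Gram--Schmidt orthogonality conditions $\langle L_n^\alpha, v_s\rangle_{W^\alpha} = \wt K_n \delta_{ns}$ for $s=1,\hdots,n$, where the $v_s$ are as in \eqref{eq;vs}. For $s=1$, using $v_1 = (x+\alpha)+1$ gives
\[
\langle L_n^\alpha, v_1\rangle_{W^\alpha}
= \sum_{k=0}^n a_{nk}\bigl[\wt\mu_{k+1} + \wt\mu_k\bigr],
\]
which is precisely the second row. For $2\le s\le n$, using $v_s=(x+\alpha)^s$ produces
\[
\langle L_n^\alpha, v_s\rangle_{W^\alpha}
= \sum_{k=0}^n a_{nk}\,\wt\mu_{k+s},
\]
yielding the $(s+1)$-st row. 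Note the dramatic simplification relative to Section \ref{s-FirstRep}: no binomial sums appear, because the inner products of basis elements are exactly adjusted moments.

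Invertibility of $\wt A$ follows from the same uniqueness argument as for $A$: by Lemma \ref{lemma} the exceptional condition plus orthogonality to $\cL_{n-1}$ determines $L_n^\alpha$ up to a scalar multiple, so the homogeneous system $\wt A a = 0$ admits only the trivial solution. Cramer's rule then gives $a_{nk} = (\det \wt A_k)/(\det \wt A)$, and assembling the polynomial $\sum_{k=0}^n a_{nk}(x+\alpha)^k$ yields the first equality. For the second equality, I imitate the trick at the end of the proof of Theorem \ref{t-FirstRep}: because the last entry of $b$ is $\wt K_n$ and all other entries vanish, one has
\[
\left|\begin{array}{c}(\text{First } n \text{ rows of } \wt A)\\ 0\,\,\hdots\,\,0\,\,\, (x+\alpha)^k\,\,\, 0\,\,\hdots\,\,0\end{array}\right|
= \frac{\det \wt A_k}{\wt K_n}\,(x+\alpha)^k,
\]
and expanding the displayed determinant along the last row gives the determinantal form. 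The main (very mild) obstacle is simply bookkeeping on the first two rows of $\wt A$, which are structurally different from the rest; everything else reduces to a clean copy of the Section \ref{s-FirstRep} argument.
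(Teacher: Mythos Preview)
Your proposal is correct and follows essentially the same route as the paper: derive the first row from the exceptional condition \eqref{eq:span} in the $(x+\alpha)^k$ basis, obtain the remaining rows from the orthogonality relations $\langle L_n^\alpha, v_s\rangle_{W^\alpha}=\wt K_n\delta_{ns}$ (noting, as you do, that the binomial sums disappear), invoke uniqueness for invertibility of $\wt A$, and finish with Cramer's rule and the last-row expansion trick exactly as in Theorem~\ref{t-FirstRep}.
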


\begin{proof}
The idea of establishing matrix $\wt A$ is the same way as was for matrix $A$ in Theorem \ref{t-FirstRep}. Since the moments are adjusted to better assimilate $v_k(x) = (x+\alpha)^k$, $k\ge 2$, from equations \eqref{eq;vs}, the binomial formula is not required and $\wt A$ turns out simpler that $A$.

The first row of $\wt A$ also simplifies: As before, we use the exceptional condition \eqref{eq:span}.
We obtain $$\left(L_{n}^{\alpha}\right)'(x) = \sum_{k = 1}^{n} k a_{nk} (x+\alpha)^{k-1}$$ and so $$L_n^\alpha(-\alpha) = a_{n0}\quad \text{as well as}\quad\left(L_{n}^{\alpha}\right)'(-\alpha) = a_{n1}.$$ We have
\[
0= \left(L_{n}^{\alpha}\right)'(-\alpha) - L_{n}^{\alpha}(-\alpha) 
= a_{n1}-a_{n0} .
\]
The entries in the first row of $\wt A$ follow.

The other $n$ conditions are, again, obtained via orthogonality.
So we have
\[
\left< L_{n}^\alpha \,\,,\, v_k \right>_{W^\alpha} = \wt K_n\delta_{nk}
\quad (k = 1, \hdots \, , n).
\]

For $k = 1$, we observe as before
\begin{align*}
\wt K_n\delta_{n1} 
&= \left< L_{n}^\alpha \,\,,\, v_1 \right>_{W^\alpha} \\
&= \left< \sum_{k=0}^n a_{nk}(x+\alpha)^k \,\,,\; x+\alpha+1 \right>_{W^\alpha}\\
&= \int_0^\infty \left(\sum_{k=0}^n a_{nk}(x+\alpha)^k\right)(x+\alpha+1) \, W^\alpha(x) \, dx\\
 &
 =
  \sum_{k=0}^n a_{nk} (\widetilde{\mu}_k + \widetilde{\mu}_{k+1}).
\end{align*}
So the second row of the matrix $\wt A$ is the first vector in the dot product
\[
\left[ \begin{array}{cccc}
\widetilde{\mu}_0 + \widetilde{\mu}_1 \,\,\,\,&\,\,\,\, 
\widetilde{\mu}_1 + \widetilde{\mu}_2 \,\,\,\,&\,\,\,\, \hdots \,\,\,\,&\,\,\,\, \widetilde{\mu}_n + \widetilde{\mu}_{n+1} \end{array} \right] \cdot 
a
= \wt K_n\delta_{n1}.
\]

When $n\ge 2$ we consider $2\le s\le n$. Again we compute
\begin{align*}
\wt K_n\delta_{ns} 
&= 
\left< L_{n}^\alpha \,\,,\, v_s \right>_{W^\alpha} \\
&= \left< \sum_{k=0}^n a_{nk}(x+\alpha)^k \,\,,\; (x+\alpha)^s \right>_{W^\alpha} \\
&
  = \sum_{k=0}^n a_{nk} \; \widetilde{\mu}_{k+s}.
\end{align*}
The corresponding coefficients fill the $(s+1)$-st row of the matrix $\wt A$.

Again, the matrix $\wt A$ is invertible and we apply Cramer's rule
\[
a_{nk}
= 
\left(\det \wt A_k\right) /\left(\det \wt A\right)
\qquad(\text{for }k=0,1, \hdots, n).
\]
With the definition of the vector $b$:
\begin{equation*}
\left| \begin{array}{c}
    (\text{First } n \text{ rows of }A)  \\
  0 \,\, \hdots\,\,  0  \quad (x+\alpha)^k\quad  0\,\, \hdots \,\, 0
  \end{array} \right|
  =  \frac{\left(\det  \wt A_k\right)}{\wt K_n} \, (x+\alpha)^k ,
\end{equation*}
and the desired formula follows from expansion of the matrix by minors along the last row.
\end{proof}

\begin{rem}
\label{r-normalization}
In order to embed this representation into the literature, we relate to the normalization used in \cite{KMUG} and \cite{LLMS}: The choice
\[
\wt K_n
=
(-1)^n(\alpha+n) \Gamma(\alpha+n-1)
\]
yields the same normalization as in \cite{KMUG} and \cite{LLMS}.

Indeed, there the leading coefficient of $L_n^\alpha(x)$ is given by $(-1)^n/(n-1)!$ and with this normalization they obtained
\[
\|L_n^\alpha\|^2 = \frac{\Gamma(\alpha+n-1)(\alpha+n)}{(n-1)!}.
\]
And so by the Gram--Schmidt orthogonalization we have
\[
\wt K_n
=
\left\langle
L_n^\alpha(x), (x+\alpha)^n
\right\rangle
=
(n-1)!/(-1)^n
\|L_n^\alpha\|^2
=
(-1)^n(\alpha+n) \Gamma(\alpha+n-1).
\]
This relation was pointed out to us by R.~Milson.
\end{rem}

Finally, we verify for $n= 1$ and $n=2$ that the polynomials in Theorem \ref{t-AltRep} indeed agree with \eqref{eq:list} and \eqref{eq:list05}, respectively, as well as the normalization claimed in the latter remark.

\begin{ex*}
For $n=1$ compute
\[
\left| \begin{array}{cc}
-1 & 1\\
1&x+\alpha \end{array} \right| = -x-\alpha -1,
\]
which is a scalar multiple of \eqref{eq:list}. And, moreover, normalizing the formula in Theorem \ref{t-AltRep} according to Remark \ref{r-normalization} we obtain
\[
\frac{\wt K_n}{\det  \, \wt A}
\left| \begin{array}{cc}
-1 & 1\\
1&x+\alpha \end{array} \right|
= 
\frac{-(\alpha+1)\Gamma(\alpha)}{(-\alpha-1)\Gamma(\alpha)}(-x-\alpha -1)
=
-x-\alpha -1,
\]
the leading term of which is in agreement with the leading term in Remark \ref{r-normalization}, since  for $n=1$ we have $(-1)^n/(n-1)! =-1$.
\end{ex*}

\begin{ex*}
Take $n=2$. By co-factor expansion along the last row we evaluate
\begin{align*}
L_2^\alpha(x)
&
=
\left| \begin{array}{ccc}
-1 & 1 & 0 \\
\widetilde{\mu}_0 + \widetilde{\mu}_1 & 
\widetilde{\mu}_1 + \widetilde{\mu}_2 & \widetilde{\mu}_2 + \widetilde{\mu}_{3} \\
1&x+\alpha &(x+\alpha)^2 \end{array} \right|
\\&
=
\wt\mu_2+\wt\mu_3
+
(x+\alpha)(\wt\mu_2+\wt\mu_3)
+
(x+\alpha)^2[-\wt\mu_0-2\wt\mu_1 - \wt\mu_2].
\end{align*}

For $k=0$ in part (a) of Theorem \ref{t-AdMom} the recursion relation reduces to $$\wt\mu_2 = 2\alpha \wt\mu_1+\alpha\wt\mu_0,$$ so that $$-\wt\mu_0-2\wt\mu_1 = -\wt\mu_2/\alpha.$$ And recall that the recursion relation for $k=1$ reduces to $$\wt\mu_3 = (2\alpha+1)\wt\mu_2.$$

With this we have up to normalization
\[
L_2^\alpha(x)
=
(2\alpha+2)\wt\mu_2
+(x+\alpha)(2\alpha+2)\wt\mu_2
-(x+\alpha)^2 (1/\alpha +1) \wt\mu_2.
\]
Factoring out $$-\left(\frac{1}{\alpha}+1\right)\wt\mu_2 = - \frac{\alpha+1}{\alpha}\wt\mu_2$$ we obtain
\begin{align*}
L_2^\alpha(x)
&=
 - \frac{\alpha+1}{\alpha}\wt\mu_2\left[
-2\alpha-2\alpha(x+\alpha) + (x+\alpha)^2\right]
\\&
 =
  - \frac{\alpha+1}{\alpha}\wt\mu_2\left[ -2\alpha -2\alpha x -2\alpha^2 +x^2 +2\alpha x +\alpha^2\right]
 \\&
=
 - \frac{\alpha+1}{\alpha}\wt\mu_2\left[
 x^2 - \alpha^2 -2\alpha\right].
\end{align*}
This is in agreement with the degree polynomial given by \eqref{eq:list05}, since they both span the same eigenspace.

Again, we compare this to the Remark \ref{r-normalization} about normalization. It is tedious but elementary to compute
\[
\frac{\wt K_2}{\det \wt A}
=
\frac{-(\alpha+2)}{\wt\mu_2(\alpha+3+2/\alpha)}
\]
for $\wt K_2$ as in Remark \ref{r-normalization}.
Now, the leading term of $L^\alpha_2(x)$ (with the normalization described in Theorem \ref{t-AltRep} and with $\wt K_2$ from Remark \ref{r-normalization}) is given by
\[
\left(-\frac{\alpha+1}{\alpha} \wt\mu_2\right)\left( \frac{\wt K_n }{ \det \wt A}\right)
=
\frac{(\alpha+1)(\alpha+2) }{\alpha^2+3\alpha + 2} = 1.
\]
And, again, this is in agreement with the predicted leading term coefficient in Remark \ref{r-normalization}, since  for $n=2$ we have $(-1)^n/(n-1)! =1$. 
\end{ex*}

\emph{Acknowledgement.} We would like to thank K. Busse and L.L.~Littlejohn for useful discussions.

\end{document}